\theoremstyle{plain}
\newtheorem{Thm}{Theorem}
\newtheorem{Cor}{Corollary}
\newtheorem{Conj}{Conjecture}
\theoremstyle{definition}
\newtheorem*{Ack}{Acknowledgment}
\theoremstyle{remark}
\def\Z{\mathbb Z}
\def\N{\mathbb N}
\def\1{{\bf 1}}
\def\pmod #1{\ ({\rm{mod}}\ #1)}
\begin{document}
\title{Proof of a conjecture of Sun}
\author{Hao Pan}
\email{haopan79@yahoo.com.cn}
\address{Department of Mathematics, Nanjing University,
Nanjing 210093, People's Republic of China} 
\maketitle
\begin{abstract}
We confirm a conjecture of Sun.
\end{abstract}

Recently, Z.-W. Sun \cite{Sun} proved that for any $k\geq1$,
$$
\frac{1}{(2^k-2)n+1}\binom{(2^k-1)n}{n}\binom{2(2^k-1)n}{(2^k-1)n}
$$
is divisible by
$$
2^{k-1}\binom{2n}{n}.
$$
One key of Sun's proof is the following lemma:

\medskip{\it For positive integers $n$ and $k$, the number of $1$'s in the binary expansion of $(2^k-1)n$ is at least $k$.}\medskip

\noindent In fact, Sun got a stronger result:

\medskip{\it For a prime $p$ and positive integers $n$ and $k$, The sum of all digits in the expansion of $(p^k-1)n$ in base $p$ is at least $k(p-1)$.}\medskip

\noindent Motivated by the above results, Sun made the following conjecture.
\begin{Conj}\label{c1} 
{\rm (I)} Suppose that $n,m,k$ are positive integers and $m\geq 2$. Then there are at least $k$ non-zero digits in the expansion of $\frac{m^k-1}{m-1}n$ in base $m$.

{\rm (II)} Suppose that $n,m,k$ are positive integers and $m\geq 2$. Then the sum of all digits in the expansion of $(m^k-1)n$ in base $m$ is at least $k(m-1)$.
\end{Conj}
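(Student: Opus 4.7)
The plan is to prove (II) first by an elementary carry-counting argument valid for every base $m\ge 2$, and then to deduce (I) from (II) by a short subadditivity estimate. The guiding observation is that, although Sun's proof of the prime case of (II) uses Legendre's formula $v_p(N!)=(N-s_p(N))/(p-1)$, the real mechanism is the universal carry identity
\[
s_m(A)+s_m(B)-s_m(A+B)=(m-1)\,c(A,B),
\]
where $s_m$ is the base-$m$ digit sum and $c(A,B)$ is the number of positions at which a carry occurs when $A$ and $B$ are added in base $m$. This identity holds for every integer $m\ge 2$ and requires no primality hypothesis.

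For (II), I would apply the identity with $A=n$, $B=(m^k-1)n$, and $A+B=m^k n$. Since $s_m(m^k n)=s_m(n)$, it reduces to $s_m((m^k-1)n)=(m-1)\,c$, so it suffices to show that the number $c$ of carries in $n+(m^k-1)n$ is at least $k$. The digit sum is invariant under multiplying $n$ by a power of $m$, so I may assume $m\nmid n$, i.e.\ that the lowest base-$m$ digit $a_0$ of $n$ is non-zero. For $0\le i\le k-1$ the $i$-th digit of $m^k n$ is $0$, so the column-$i$ equation reads $a_i+b_i+c_{i-1}=m\,c_i$, where $b_i$ is the $i$-th digit of $(m^k-1)n$ and $c_{-1}=0$. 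At $i=0$, $a_0+b_0=m c_0$ with $a_0\ge 1$ forces $c_0=1$; inductively, for $1\le i\le k-1$, the hypothesis $c_{i-1}=1$ makes the left-hand side of $a_i+b_i+1=m c_i$ at least $1$, again forcing $c_i=1$. Hence $c_0=\dots=c_{k-1}=1$, so $c\ge k$.

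For (I), set $M=\frac{m^k-1}{m-1}\,n$, so that $(m-1)M=(m^k-1)n$. If $M$ has $r$ non-zero base-$m$ digits, write $M=\sum_{j=1}^r \alpha_j m^{i_j}$ with $\alpha_j\in\{1,\dots,m-1\}$. Then $(m-1)M=\sum_j \alpha_j(m-1)m^{i_j}$, and each summand has base-$m$ digit sum equal to $m-1$, because $\alpha_j(m-1)=(\alpha_j-1)m+(m-\alpha_j)$. Subadditivity of $s_m$ therefore gives $s_m((m-1)M)\le r(m-1)$. Combined with (II), $r(m-1)\ge s_m((m^k-1)n)\ge k(m-1)$, whence $r\ge k$, which is (I).

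The only real obstacle is conceptual: one must recognize that (II) is not a statement about $p$-adic valuations but about the carry structure of base-$m$ addition, so the prime-case argument extends verbatim once the right identity is isolated. After that, deducing (I) from (II) via subadditivity is essentially immediate.
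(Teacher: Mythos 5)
Your proof is correct, and it takes a genuinely different route from the paper. The paper proceeds through an abstract combinatorial theorem: it folds the base-$m$ digits of the number into $k$ accumulated values $a_1,\dots,a_k$ indexed by digit position modulo $k$, notes that the congruence $\sum_j a_jm^{k-j}\equiv0\pmod{d}$ is stable under cyclic shifts and under the reduction $(mq+b,\dots,a_k)\mapsto(b,\dots,a_k+q)$, and runs a descent on a minimizer of a symmetric weight $\tau$ to force every entry below $m$; parts (I) and (II) are then read off by taking $\tau=\sum_j\lceil x_j/m\rceil$ and $\tau=\sum_j\lfloor x_j/(m-1)\rfloor$, and are obtained independently of one another. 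You instead isolate the universal carry identity $s_m(A)+s_m(B)-s_m(A+B)=(m-1)c(A,B)$, apply it to $n+(m^k-1)n=m^kn$, and show by a column-by-column induction (after the harmless reduction to $m\nmid n$) that the $k$ lowest columns each produce a carry, which gives (II) directly; you then deduce (I) from (II) via subadditivity of $s_m$ applied to $(m-1)M=\sum_j\alpha_j(m-1)m^{i_j}$, each summand having digit sum exactly $m-1$ since $\alpha_j(m-1)=(\alpha_j-1)m+(m-\alpha_j)$. All steps check out (the carries are automatically in $\{0,1\}$, so ``at least $1$'' does force a carry in each column). Your argument is shorter, self-contained, and makes transparent why primality of $m$ is irrelevant --- the mechanism is the carry structure of base-$m$ addition, exactly as you say. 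What the paper's route buys in exchange is generality: its Theorem~\ref{t1} holds for an arbitrary divisor $d$ of $m^k-1$ and an arbitrary admissible weight $\tau$, so it is a reusable tool beyond these two corollaries, whereas your identity is tailored to the digit-sum weight.
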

In this short note, we shall confirm  Conjectures \ref{c1}.

\begin{Thm}\label{t1}
Suppose that $m\geq 2$, $k\geq 1$ and $a_1,\ldots,a_k\in\N$ are not all zero. Let $d$ be a divisor of $m^k-1$. Suppose that $\tau(x_1,\ldots,x_n)$ is a nonegative integer-valued symmetric function satisfying that
$$
\tau(mq+b,x_2,\ldots,x_{k-1},x_k)\geq\tau(b,x_2,\ldots,x_{k-1},x_k+q)
$$
for any $q\geq 1$ and $0\leq b<m$. If
$$
a_1m^{k-1}+a_2m^{k-2}+\cdots+a_{k-1}m+a_k\equiv0\pmod{d},
$$
then
$$\tau(a_1,a_2,\ldots,a_n)\geq\min_{1\leq t\leq (m^k-1)/d}\{\tau^\circ(td)\},$$
where $$
\tau^\circ(h)=\tau(c_1,c_2,\ldots,c_k)
$$
if $0\leq h<m^k$ has an $m$-adic expansion $h=c_1m^{k-1}+c_2m^{k-2}+\cdots+c_k$.
\end{Thm}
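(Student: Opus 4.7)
The plan is to convert the tuple $(a_1,\ldots,a_k)$ into an honest base-$m$ representation of some positive multiple of $d$ less than $m^k$ by a chain of elementary ``carry'' moves, none of which increase $\tau$ and each of which preserves the residue of $N:=a_1m^{k-1}+\cdots+a_k$ modulo $m^k-1$.

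By the symmetry of $\tau$, the hypothesized monotonicity generalizes as follows: whenever some coordinate $y_i$ can be written as $mq+b$ with $q\geq1$ and $0\leq b<m$, and whenever $j\neq i$, the operation that replaces $y_i$ by $b$ and $y_j$ by $y_j+q$ does not raise $\tau$ (permute $y_i$ into slot $1$ and $y_j$ into slot $k$, then apply the hypothesis). I specialize to \emph{cyclic} carries by setting $j=i-1$, with position $0$ identified with position $k$. A quick computation shows such a move changes $N$ by $q(m^{k-j}-m^{k-i+1})$: this vanishes when $i\neq1$ and equals $-q(m^k-1)$ when $i=1$, so $N\bmod (m^k-1)$ -- and hence $N\bmod d$, since $d\mid m^k-1$ -- is invariant under cyclic carries. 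Iterating until every coordinate lies in $\{0,\ldots,m-1\}$ (which must happen, since each move strictly reduces the digit sum $\sum_\ell y_\ell$ by $(m-1)q\geq 1$) produces a tuple $(c_1,\ldots,c_k)$ whose associated value $h:=c_1m^{k-1}+\cdots+c_k$ lies in $[0,m^k-1]$, is divisible by $d$, and satisfies $\tau(a_1,\ldots,a_k)\geq\tau(c_1,\ldots,c_k)=\tau^\circ(h)$.

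The main (and only) remaining subtlety is to verify $h\geq1$, so that $h=td$ with $1\leq t\leq(m^k-1)/d$; this is the sole place where the assumption ``not all $a_i$ zero'' enters. The point is that any single carry step outputs a tuple with at least one positive coordinate, namely $y_j+q\geq 1$, so no carry can land at the all-zero tuple. Consequently the terminal tuple is either the original (non-zero) input tuple, in case no carry was needed, or else inherits a positive coordinate from the final carry. Either way $h\geq 1$, hence $h=td\geq d$, and
$$
\tau(a_1,\ldots,a_k)\geq\tau^\circ(td)\geq\min_{1\leq t\leq(m^k-1)/d}\tau^\circ(td).
$$
The whole argument thus comes down to bookkeeping about cyclic carries and the fact that $m^k\equiv 1\pmod{m^k-1}$; the one delicate point is the non-vanishing of $h$.
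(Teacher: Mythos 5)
Your proof is correct and rests on exactly the same mechanism as the paper's: the carry move $y_i=mq+b\mapsto b$, $y_j\mapsto y_j+q$ does not increase $\tau$, preserves the value of $a_1m^{k-1}+\cdots+a_k$ modulo $m^k-1$ (hence modulo $d$) because $qm^k\equiv q$, strictly decreases the digit sum so the process terminates, and never produces the all-zero tuple since $y_j+q\geq1$. The only difference is organizational: you run the reduction directly on the given tuple, whereas the paper phrases it as an extremal argument (a $\tau$-minimizer of least digit sum must already have all entries below $m$); both are sound.
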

\begin{proof}
Let
$$
S=\{(a_1,\ldots,a_k):\, a_1,\ldots,a_k\in\N\text{\ are not all zero},\ \sum_{j=1}^ka_jm^{k-j}\equiv0\pmod{d}\}.
$$
Since
$$
m\bigg(\sum_{j=1}^ka_jm^{k-j}\bigg)=\sum_{j=1}^ka_jm^{k-j+1}\equiv a_1+\sum_{j=1}^{k-1}a_{j+1}m^{k-j}\pmod{d},
$$
$(a_1,a_2,\ldots,a_k)\in S$ implies that $(a_2,a_3,\ldots,a_k,a_1)\in S$. For ${\bf x}=(a_1,\ldots,a_k)$, define
$$\sigma({\bf x})=a_1+\cdots+a_k$$
Let
$$
S^*=\{(a_1,\ldots,a_k)\in S:\,\tau(a_1,\ldots,a_k)=\min_{(a_1,\ldots,a_k)\in S}\{\tau(a_1,\ldots,a_k)\}\}
$$
Choose an ${\bf x}=(a_1,\ldots,a_k)\in S^*$ such that $$\sigma({\bf x})=\min_{(a_1,\ldots,a_k)\in S^*}\{\sigma(a_1,\ldots,a_k)\}.$$
And noting that $\tau$ is symmetric, without loss of generality, we may assume that $a_1\geq \max\{a_2,a_3,\ldots,a_k\}$. We shall prove that $a_1<m$. Assume on the contrary that $a_1\geq m$. Write $a_1=mq+b$ with $0\leq b<m$ and $q\geq 1$. Then
$$
a_1m^{k-1}=(mq+b)m^{k-1}\equiv bm^{k-1}+q\pmod{d}.
$$
Hence ${\bf x}^*=(b,a_2,\ldots,a_{k-1},a_k+q)\in S$. Note that now
$$
\tau(a_1,\ldots,a_k)=\tau(mq+b,a_2,\ldots,a_{k-1},a_k)\geq\tau(b,a_2,\ldots,a_{k-1},a_k+q)
$$
Hence ${\bf x}^*$ also lies in $S^*$. But clearly
$$\sigma({\bf x})-\sigma({\bf x}^*)=a_1+a_k-(b+a_k+q)=(m-1)q\geq 1,
$$
i.e., $\sigma({\bf x}^*)<\sigma({\bf x})$. This evidently leads to a contradiction with the choice of ${\bf x}$.

So we must have $a_1<m$, i.e., $\max\{a_1,a_2,\ldots,a_k\}\leq m-1$. Thus 
$a_1m^{k-1}+\cdots+a_k=t_0d$ with $1\leq t_0\leq(m^k-1)/d$, and
$$\tau(a_1,a_2,\ldots,a_n)=\tau^\circ(t_0d)\geq\min_{1\leq t\leq (m^k-1)/d}\{\tau^\circ(td)\}.$$
\end{proof}
\begin{Cor}\label{cr1}
Suppose that $m\geq 2$, $k\geq 1$ and $a_1,\ldots,a_k\in\N$ are not all zero. If
$$
a_1m^{k-1}+a_2m^{k-2}+\cdots+a_{k-1}m+a_k\equiv0\pmod{\frac{m^k-1}{m-1}},
$$
then
$$\sum_{j=1}^k\left\lceil\dfrac{a_j}{m}\right\rceil\geq k,$$
where $\lceil x\rceil=\min\{z\in\Z:\,z\geq x\}$.
\end{Cor}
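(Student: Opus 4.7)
The plan is to derive Corollary \ref{cr1} as a direct application of Theorem \ref{t1} with $d = (m^k-1)/(m-1)$ and the specific symmetric function
$$
\tau(x_1,\ldots,x_k)=\sum_{j=1}^{k}\left\lceil\frac{x_j}{m}\right\rceil.
$$
This $\tau$ is clearly nonnegative, integer-valued, and symmetric, so the bulk of the work is to check the two nontrivial inputs required by Theorem \ref{t1}: the monotonicity inequality and the computation of the minimum on the right-hand side.

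First I would verify the inequality
$$
\tau(mq+b,x_2,\ldots,x_{k-1},x_k)\geq\tau(b,x_2,\ldots,x_{k-1},x_k+q)
$$
for $q\geq 1$ and $0\leq b<m$. After cancelling the $\lceil x_j/m\rceil$ terms for $2\leq j\leq k-1$ and using $\lceil(mq+b)/m\rceil=q+\lceil b/m\rceil$ on the left, the claim reduces to
$$
q+\left\lceil\frac{x_k}{m}\right\rceil\geq\left\lceil\frac{x_k+q}{m}\right\rceil,
$$
which is immediate from subadditivity of the ceiling together with $\lceil q/m\rceil\leq q$ (since $m\geq 1$).

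Next I would compute the quantity $\min_{1\leq t\leq(m^k-1)/d}\tau^\circ(td)$. With our choice of $d$ we have $(m^k-1)/d=m-1$, and for any $1\leq t\leq m-1$ the product
$$
td=t\cdot\frac{m^k-1}{m-1}=t(m^{k-1}+m^{k-2}+\cdots+1)
$$
has $m$-adic expansion with every digit equal to $t$, because $0<t<m$. Hence
$$
\tau^\circ(td)=\sum_{j=1}^k\left\lceil\frac{t}{m}\right\rceil=k,
$$
so the minimum equals $k$.

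Applying Theorem \ref{t1} to any $(a_1,\ldots,a_k)\in\N^k\setminus\{0\}$ satisfying $\sum_j a_jm^{k-j}\equiv 0\pmod{d}$ then gives $\sum_{j=1}^k\lceil a_j/m\rceil\geq k$, which is exactly the claim. The only substantive obstacle is the digit computation for $td$, and that dissolves as soon as one notes that $t<m$ forces no carries in the base-$m$ representation of $t(m^{k-1}+\cdots+1)$.
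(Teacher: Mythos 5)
Your proposal is correct and follows essentially the same route as the paper: apply Theorem \ref{t1} with $d=(m^k-1)/(m-1)$ and $\tau(x_1,\ldots,x_k)=\sum_j\lceil x_j/m\rceil$, verify the shift inequality via $\lceil(mq+b)/m\rceil=q+\lceil b/m\rceil$ and subadditivity of the ceiling, and evaluate the minimum by noting that $t\cdot\frac{m^k-1}{m-1}$ has all base-$m$ digits equal to $t$ for $1\leq t\leq m-1$. No gaps; this matches the paper's argument.
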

\begin{proof} Let
$$\tau(x_1,\ldots,x_k)=\sum_{j=1}^k\left\lceil\dfrac{x_j}{m}\right\rceil.$$
Note that
$$
\left\lceil\dfrac{mq+b}{m}\right\rceil=q+\left\lceil\dfrac{q+b}{m}\right\rceil\geq q+\left\lceil\dfrac{b}{m}\right\rceil,
$$
and
$$
\left\lceil\dfrac{x_k+q}{m}\right\rceil\leq\left\lceil\dfrac{x_k}{m}\right\rceil+\left\lceil\dfrac{q}{m}\right\rceil\leq\left\lceil\dfrac{x_k}{m}\right\rceil+q.
$$
We have $\tau(mq+b,x_2,\ldots,x_{k-1},x_k)\geq\tau(b,x_2,\ldots,x_{k-1},x_k+q)$. Hence $\tau$ satisfies the requirements of Theorem \ref{t1}. Thus by Theomre \ref{t1},
\begin{align*}
\sum_{j=1}^k\left\lceil\dfrac{a_j}{m}\right\rceil=&\tau(a_1,\ldots,a_k)\geq\min_{1\leq t\cdot\leq m-1}\bigg\{\tau^\circ\bigg(t\frac{m^k-1}{m-1}\bigg)\bigg\}\\
\geq&\min_{1\leq t\cdot\leq m-1}\bigg\{\tau^\circ\bigg(\sum_{j=1}^k tm^{k-j}\bigg)\bigg\}=\min_{1\leq t\cdot\leq m-1}\bigg\{\sum_{j=1}^k\left\lceil\frac{t}{m}\right\rceil\bigg\}=k.
\end{align*}
\end{proof}

Let us explain why Corollary \ref{cr1} implies Part (I) of Conjecture \ref{c1}. White 
$$n\cdot\frac{m^k-1}{m-1}=b_1m^h+b_2m^{h-1}+\cdots+b_{h-1}m+b_h$$ 
with $0\leq b_i<m$. Let
$$
a_j=\sum_{\substack{1\leq i\leq h\\ h+1-i\equiv k+1-j\pmod{k}}}b_i.
$$
Since all $b_i$ is less than $m$, we have
$$
\left\lceil\dfrac{a_j}{m-1}\right\rceil\leq|\{1\leq i\leq h:\, h+1-i\equiv k+1-j\pmod{k},\ b_i>0\}|.
$$
On the other hand,
$$
\sum_{i=1}^hb_im^{h+1-i}\equiv\sum_{j=1}^ka_jm^{k+1-j}\equiv0\pmod{(m^k-1)/(m-1)}.
$$
It follows from Theorem \ref{t1} that
$$
|\{1\leq i\leq h:\, b_i>0\}|\geq\sum_{j=1}^k\left\lceil\dfrac{a_j}{m-1}\right\rceil\geq\sum_{j=1}^k\left\lceil\dfrac{a_j}{m}\right\rceil\geq k.
$$

Furthermore, Part (II) of Conjecture \ref{c1} is an immediate consequence of the following corollary.
\begin{Cor}\label{cr2}
Suppose that $m\geq 2$, $k\geq 1$ and $a_1,\ldots,a_k\in\N$ are not all zero. If
$$
a_1m^{k-1}+a_2m^{k-2}+\cdots+a_{k-1}m+a_k\equiv0\pmod{m^k-1},
$$
then
$$\sum_{j=1}^k\left\lfloor\dfrac{a_j}{m-1}\right\rfloor\geq k,$$
where $\lfloor x\rfloor=\max\{z\in\Z:\,z\leq x\}$.
\end{Cor}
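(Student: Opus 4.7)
The plan is to apply Theorem \ref{t1} with $d = m^k - 1$ and the test function
$$
\tau(x_1,\ldots,x_k)=\sum_{j=1}^k\left\lfloor\frac{x_j}{m-1}\right\rfloor,
$$
in direct analogy with the choice $\sum_j\lceil x_j/m\rceil$ used to deduce Corollary \ref{cr1}. This $\tau$ is manifestly symmetric and takes values in $\N$, so the work reduces to verifying the monotonicity hypothesis
$$
\tau(mq+b,x_2,\ldots,x_{k-1},x_k)\geq\tau(b,x_2,\ldots,x_{k-1},x_k+q)
$$
for $q\geq 1$ and $0\leq b<m$, and then evaluating the minimum over $t$.

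For the monotonicity, I would use the identity $mq+b=(m-1)q+(q+b)$ to rewrite
$$
\left\lfloor\frac{mq+b}{m-1}\right\rfloor=q+\left\lfloor\frac{q+b}{m-1}\right\rfloor,
$$
combined with the elementary upper bound
$$
\left\lfloor\frac{x_k+q}{m-1}\right\rfloor\leq\left\lfloor\frac{x_k}{m-1}\right\rfloor+\left\lfloor\frac{q}{m-1}\right\rfloor+1,
$$
which follows from writing $x_k$ and $q$ with respect to the modulus $m-1$. A short case split on whether $b=m-1$ (in which case $\lfloor b/(m-1)\rfloor=1$) or $0\leq b\leq m-2$ (in which case this floor is $0$) reduces the difference $\tau(mq+b,x_2,\ldots,x_k)-\tau(b,x_2,\ldots,x_{k-1},x_k+q)$ to at least $q-1\geq 0$. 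This is the one genuinely arithmetic step: the extra ``$+1$'' coming from $\lfloor(q+b)/(m-1)\rfloor$ when $b=m-1$ is precisely what is needed to compensate for the ``$+1$'' in the bound on $\lfloor(x_k+q)/(m-1)\rfloor$, and I expect this bookkeeping with the value $b=m-1$ to be the only delicate point.

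Once monotonicity is established, the final step is immediate. With $d=m^k-1$ we have $(m^k-1)/d=1$, so the minimum in Theorem \ref{t1} runs over the single value $t=1$. The $m$-adic expansion of $m^k-1$ is $(m-1,m-1,\ldots,m-1)$, so
$$
\tau^\circ(m^k-1)=\sum_{j=1}^k\left\lfloor\frac{m-1}{m-1}\right\rfloor=k,
$$
and Theorem \ref{t1} then yields $\sum_{j=1}^k\lfloor a_j/(m-1)\rfloor\geq k$, as required.
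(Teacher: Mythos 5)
Your proof is correct and follows the same route as the paper: apply Theorem \ref{t1} with $d=m^k-1$ and $\tau(x_1,\ldots,x_k)=\sum_{j}\lfloor x_j/(m-1)\rfloor$, verify the monotonicity hypothesis, and evaluate $\tau^\circ(m^k-1)=\sum_{j=1}^k\lfloor (m-1)/(m-1)\rfloor=k$. The only real difference is in the monotonicity check: the paper uses the chain $\lfloor (x_k+q)/(m-1)\rfloor\leq\lfloor x_k/(m-1)\rfloor+\lfloor q/(m-1)\rfloor+1\leq\lfloor x_k/(m-1)\rfloor+q$, whose last step fails for $m=2$ and forces a separate reduction of that case to Corollary \ref{cr1}, whereas your exact identity $\lfloor(mq+b)/(m-1)\rfloor=q+\lfloor(q+b)/(m-1)\rfloor$ retains exactly the extra $+1$ needed to absorb the $+1$ in the upper bound, so your argument works uniformly for all $m\geq 2$ --- a small but genuine simplification.
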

\begin{proof} Since the case $m=2$ easily follows from Corollary \ref{cr1}, we may assume that $m\geq 3$. Let
$$\tau(x_1,\ldots,x_k)=\sum_{j=1}^k\left\lfloor\dfrac{x_j}{m-1}\right\rfloor.$$
Since
$$
\left\lfloor\dfrac{mq+b}{m-1}\right\rfloor\geq q+\left\lfloor\dfrac{b}{m-1}\right\rfloor,
$$
and
$$
\left\lfloor\dfrac{x_k+q}{m-1}\right\rfloor\leq\left\lfloor\dfrac{x_k}{m-1}\right\rfloor+\left\lfloor\dfrac{q}{m-1}\right\rfloor+1\leq\left\lfloor\dfrac{x_k}{m-1}\right\rfloor+q,
$$
we have $\tau(mq+b,x_2,\ldots,x_{k-1},x_k)\geq\tau(b,x_2,\ldots,x_{k-1},x_k+q)$. Thus $\tau$ satisfies the requirements of Theorem \ref{t1}. And applying Theorem \ref{t1}, we get
$$
\sum_{j=1}^k\left\lfloor\dfrac{a_j}{m-1}\right\rfloor\geq\tau^\circ(m^k-1)=\tau^\circ\bigg(\sum_{j=1}^k (m-1)\cdot m^{k-j}\bigg)\\
=\sum_{j=1}^k\left\lfloor\dfrac{m-1}{m-1}\right\rfloor=k.
$$
\end{proof}
\begin{Ack}I am grateful to Professor Zhi-Wei Sun for his very helpful suggestions on this paper.\end{Ack}

\end{document}